\theoremstyle{definition}
\newtheorem{thm}{Theorem}[section]
\newtheorem{lem}[thm]{Lemma}
\newtheorem{prop}[thm]{Proposition}
\newtheorem{conj}[thm]{Conjecture}
\newtheorem{rem}[thm]{Remark}
\def\Q{\mathbb{Q}}
\def\Z{\mathbb{Z}}
\def\Zp{\mathbb{Z}_{p}}
\def\Gal{\mathop{\mathrm{Gal}}\nolimits}
\def\jump#1{[\![{#1}]\!]}
\def\L{\Lambda}
\def\wt{\widetilde}
\def\p{\mathfrak{p}}
\def\mod{\mathop{\mathrm{mod}}\ }
\def\vphi{\varphi}
\def\ol{\overline}
\def\bp{\overline{\mathfrak{p}}}
\def\O{\mathcal{O}}
\def\P{\mathfrak{P}}
\def\bP{\overline{\mathfrak{P}}}
\def\rankZp{\mathop{\mathrm{rank}_{\Zp}}\nolimits}
\def\n{\nu}
\def\QQ{\mathfrak{Q}}
\def\q{\mathfrak{q}}
\numberwithin{equation}{section}
\begin{document}
\title{On Greenberg's generalized conjecture for imaginary quartic fields}
\author{Naoya Takahashi}
\address{Department of Mathematics, Faculty of Science, Kyoto University, Kyoto 606-8502, Japan}
\email{n.takahashi@math.kyoto-u.ac.jp}
\email{n.takahashi4017@gmail.com}
\date{January 31, 2020}
\subjclass[2010]{Primary 11R23; Secondary 11R29, 11R16}
\keywords{Iwasawa modules, ideal class groups, Greenberg's conjecture, imaginary quartic fields}

\maketitle

%\tableofcontents
%\clearpackage

\begin{abstract}
For an algebraic number field $K$ and a prime number $p$, let $\wt{K}/K$ be the maximal multiple $\Zp$-extension. Greenberg's generalized conjecture (GGC) predicts that the Galois group of the maximal unramified abelian pro-$p$ extension of $\wt{K}$ is pseudo-null over the completed group ring $\Zp\jump{\Gal(\wt{K}/K)}$. We show that GGC holds for some imaginary quartic fields containing imaginary quadratic fields and some prime numbers.
\end{abstract}

\section{Introduction}
Let $K$ be an algebraic number field and $p$ a prime number. Let $\wt{K}$ be the composite of all the $\Zp$-extensions of $K$. It is known that $\wt{K}/K$ is a $\Zp^{d}$-extension for some positive integer $d$. The completed group ring $\Zp\jump{\Gal(\wt{K}/K)}$ is isomorphic to the $d$-variable power series ring $\Zp\jump{T_{1},\dots,T_{d}}$. We denote by $L_{\wt{K}}$ the maximal unramified abelian pro-$p$ extension of $\wt{K}$. Then it is known that the Galois group $\Gal(L_{\wt{K}}/\wt{K})$ is a finitely generated torsion module over $\Zp\jump{\Gal(\wt{K}/K)}$; see \cite[Theorem 1]{Greenberg:1973}.

The following conjecture is often called Greenberg's Generalized Conjecture (GGC).

\begin{conj}[Greenberg {\cite[Conjecture (3.5)]{Greenberg:1998}}]
The Galois group $\Gal(L_{\wt{K}}/\wt{K})$ is a pseudo-null module over $\Zp\jump{\Gal(\wt{K}/K)}$ for every algebraic number field $K$ and every prime number $p$.
\end{conj}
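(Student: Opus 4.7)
The full conjecture as stated is the classical open Greenberg Generalized Conjecture, so I will not attempt a general proof but rather sketch a strategy for the special case announced by the paper's title. Let $K$ be an imaginary quartic field containing an imaginary quadratic field $k$. Since $r_{1}(K)=0$ and $r_{2}(K)=2$, Leopoldt's conjecture for $K$ (classical in the abelian CM case, and checkable in any specific example) gives $\Gal(\wt{K}/K)\cong\Z_{p}^{3}$, hence $\L:=\Z_{p}\jump{\Gal(\wt{K}/K)}\cong\Z_{p}\jump{T_{1},T_{2},T_{3}}$. Write $X_{K}:=\Gal(L_{\wt{K}}/\wt{K})$. Pseudo-nullity of $X_{K}$ over $\L$ is equivalent to the annihilator ideal of $X_{K}$ having height at least two, so the concrete goal is to produce two coprime annihilators.

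The plan is to reduce the problem to the two-variable Iwasawa theory of $k$, where pseudo-nullity of $X_{k}:=\Gal(L_{\wt{k}}/\wt{k})$ is known in many cases (Minardi and subsequent authors, typically for $p$ split in $k$). First I would compare $\wt{K}$ with the compositum $K\wt{k}$: both are multiple $\Z_{p}$-extensions of $K$ and by a Leopoldt-type rank count one can identify the missing $\Z_{p}$-direction in $\wt{K}/K\wt{k}$ from the cyclotomic/anticyclotomic structure of $K$. Next I would run the usual class-field-theoretic inflation-restriction argument along the tower $L_{\wt{K}}/\wt{K}/K\wt{k}/\wt{k}$: this produces an exact sequence relating $X_{K}$ to the base change of $X_{k}$ from $\Z_{p}\jump{\Gal(\wt{k}/k)}$ up to $\L$, with an error term built from decomposition and inertia groups at the finitely many primes of $\wt{k}$ that behave differently in $K\wt{k}$. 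Since flat base change between regular local rings preserves codimension, pseudo-nullity of $X_{k}$ over the two-variable algebra passes to the image in $\L$, and the question is reduced to checking that the error term is pseudo-null.

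The main obstacle, and the reason the final theorem must impose conditions on both $K$ and $p$, is precisely this pseudo-nullity check for the error term. The local contributions are cyclic $\L$-modules controlled by the $\Z_{p}$-ranks of decomposition and inertia subgroups of $\Gal(\wt{K}/K)$ at the relevant primes; when $p$ does not split suitably in $K/k$, or when the $p$-part $A_{K}$ of the ideal class group of $K$ is too large, these quotients can acquire codimension-one support and destroy pseudo-nullity. The technical heart of the proof will therefore be a careful case analysis of the splitting of $p$ in $K/k$ together with an explicit computation (or a reflection/Kummer-duality argument) bounding $A_{K}$ and related Iwasawa invariants. I would expect the final hypotheses to read roughly \emph{$p$ splits in $k$, the primes of $k$ above $p$ have a prescribed behavior in $K/k$, and a certain $p$-class group vanishes}, which would exhibit the promised ``some imaginary quartic fields and some primes'' for which GGC can be established.
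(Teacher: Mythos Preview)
You rightly identify the displayed statement as an open conjecture and pivot to the special case, and your guess at the eventual hypotheses (\emph{$p$ splits in $k$, prescribed behaviour of the primes above $p$ in $K/k$, a $p$-class-group vanishing}) is on target. But the route you propose---reduce to Minardi's pseudo-nullity result for the imaginary quadratic subfield $k$ and then pull it up via base change and inflation--restriction---is \emph{not} the one the paper follows.

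The paper never invokes pseudo-nullity of $X_k$. It works entirely over $K$, climbing a tower $K\subset K^{(1)}\subset K^{(2)}\subset K^{(3)}=\wt{K}$ of $\Zp^{i}$-extensions and applying Perrin-Riou's lemma (if the coinvariants $M/T_dM$ are pseudo-null over $\Zp\jump{T_1,\dots,T_{d-1}}$ then $M$ is pseudo-null over $\Zp\jump{T_1,\dots,T_d}$) at each stage. The decisive bottom step exploits the hypothesis that one prime $\p_F$ of $F$ above $p$ is non-split in $K/F$: this gives a $\Zp$-extension $K^{(1)}/K$ ramified at a \emph{single} prime, so Iwasawa's class-number lemma plus $p\nmid h_K$ forces $X^{(1)}=0$ outright. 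Step~2 shows the relevant coinvariant quotient of $X^{(2)}$ is finite by a direct $\Zp$-rank computation (here one uses that $K_n/F$ is abelian), and Step~3 passes from $K^{(2)}=K\wt{F}$ to $K^{(3)}$ by the inertia-group analysis you yourself sketch.

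Your strategy is closer in spirit to Bandini's, which the paper remarks covers only the biquadratic subcase. Two points in your outline would need tightening. First, ``flat base change preserves codimension'' is not the right mechanism here: for odd $p$ the rings $\Zp\jump{\Gal(K\wt{k}/K)}$ and $\Zp\jump{\Gal(\wt{k}/k)}$ are canonically isomorphic, so the real issue is comparing unramified Iwasawa modules across the degree-two extension $K\wt{k}/\wt{k}$, and it is precisely there that splitting and class-group hypotheses intervene. Second, your reduction needs $p\nmid h_k$ to cite Minardi, whereas the paper assumes only $p\nmid h_K$; these are not equivalent in general (and certainly not for $p=2$, which the paper's argument also covers). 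The tower-from-$K^{(1)}$ approach sidesteps both difficulties, which is what lets it reach non-biquadratic $K$.
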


Recall that a finitely generated $\Zp\jump{\Gal(\wt{K}/K)}$-module $M$ is called \textit{pseudo-null} if there are two non-zero annihilators $f,g \in \Zp\jump{\Gal(\wt{K}/K)}$ such that $f$ and $g$ are relatively prime. 

In this paper, we shall prove GGC for some imaginary quartic fields containing imaginary quadratic fields and some prime numbers.

\begin{thm}\label{main-theorem}
Let $F$ be an imaginary quadratic field and $p$ a prime number. Let $K/F$ be a quadratic extension of $F$. Assume that the following conditions are satisfied.

\begin{itemize}
\item The class number $h_{K}$ of $K$ is not divisible by $p$.
\item $p$ splits in $F/\Q$.
\item At least one of the primes of $F$ lying above $p$ does not split in $K/F$.
\end{itemize}
Then GGC holds for $K$ and $p$.
\end{thm}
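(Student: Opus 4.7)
My plan is to establish pseudo-nullity of $X := \Gal(L_{\wt K}/\wt K)$ over $\L := \Zp\jump{\Gal(\wt K/K)}$ by first proving triviality of the Iwasawa module of a judiciously chosen $\Zp$-extension of $K$ inside $\wt K$, then promoting this to structural information about $X$ as a module over a smaller power-series subring, and finally invoking a Cayley--Hamilton argument to produce two coprime annihilators of $X$ in $\L$. By hypothesis~(ii), write $p\O_F = \p\bp$; since Leopoldt holds for imaginary quadratic $F$, $\wt F/F$ is a $\Zp^2$-extension with natural $\Zp$-subextensions $F_\p$ and $F_\bp$, totally ramified at $\p$ and $\bp$ respectively and unramified elsewhere. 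By hypothesis~(iii) I may assume $\p$ does not split in $K/F$, so there is a unique prime $\P$ of $K$ above $\p$. Since $K$ is imaginary quartic, $\Gamma := \Gal(\wt K/K)$ has $\Zp$-rank $d \geq r_2(K)+1 = 3$.

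The first step is to prove $X_{K_\p} = 0$ for $K_\p := KF_\p$. By construction $K_\p/K$ is a $\Zp$-extension totally ramified at $\P$ and unramified at every other prime, so Iwasawa's classical theorem gives $X_{K_\p}/(\gamma-1)X_{K_\p} \cong A_K$, where $A_K$ denotes the $p$-Sylow part of the class group of $K$ and $\gamma$ topologically generates $\Gal(K_\p/K)$. Hypothesis~(i) forces $A_K = 0$, and topological Nakayama then yields $X_{K_\p} = 0$.

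The second step is to convert $X_{K_\p}=0$ into finiteness of $X/I_H X$ for $H := \Gal(\wt K/K_\p) \cong \Zp^{d-1}$. Since $L_{K_\p} = K_\p$, the naive restriction map $X \to X_{K_\p}$ is vacuous, so a sharper argument is required: one passes to the finite layers $K_{\p,n}/K$ (on which $A_{K_{\p,n}}=0$) and invokes a genus formula / control theorem comparing class groups along the abelian $\Zp^{d-1}$-extension $\wt K/K_\p$. The local data appearing in such a comparison are controlled by decomposition groups at primes of $K_\p$ above $p$, which under our hypotheses contribute only finitely many elements. Topological Nakayama over $\L$ then shows that $X$ is finitely generated as a module over $R := \Zp\jump{\Gamma/H} \cong \Zp\jump{T_1}$, with $T_1$ corresponding to a topological generator of $\Gal(K_\p/K)$.

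To conclude, choose $\sigma_2, \sigma_3 \in H$ generating two independent $\Zp$-lines (possible since $d-1 \geq 2$) and set $T_i := \sigma_i - 1 \in \L$. Each $T_i$ acts as an $R$-linear endomorphism of the finitely generated $R$-module $X$, so the Cayley--Hamilton theorem produces non-zero monic polynomials $\phi(T_1, T_2) \in R[T_2]$ and $\psi(T_1, T_3) \in R[T_3]$, both annihilating $X$ in $\L$. Any common divisor of $\phi$ and $\psi$ in the UFD $\L$ must lie in $\Zp\jump{T_1}$ (since $\phi$ is free of $T_3$ and $\psi$ is free of $T_2$) and divide the monic $\phi$ in the variable $T_2$, hence is a unit; so $\phi$ and $\psi$ are coprime annihilators of $X$ and $X$ is pseudo-null. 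The main obstacle I anticipate is precisely the second step: upgrading $X_{K_\p}=0$ to finiteness of $X/I_H X$ requires a delicate genus-theoretic or control-theoretic analysis along the full $\Zp^{d-1}$-direction $\wt K/K_\p$, with particular care needed at primes above $\bp$, whose behaviour depends on whether $\bp$ splits in $K/F$ and on the decomposition of $\bp$ in $F_\p/F$.
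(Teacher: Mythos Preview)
Your Step~1 matches the paper exactly. The real problem is the bridge from Step~2 to Step~3: the topological Nakayama claim is false. Finiteness of $X/I_H X$ does \emph{not} imply that $X$ is finitely generated over $R=\Zp\jump{\Gamma/H}\cong\Zp\jump{T_1}$. For a counterexample take $\L=\Zp\jump{T_1,T_2,T_3}$ with $I_H=(T_2,T_3)$ and $X=\L/(p,T_1)\cong\mathbb{F}_p\jump{T_2,T_3}$: then $X/(T_2,T_3)X=\mathbb{F}_p$ is finite, yet $X$ is an infinite-dimensional $\mathbb{F}_p$-vector space on which $R$ acts through $R/(p,T_1)=\mathbb{F}_p$, so it is certainly not finitely generated over $R$. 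Without finite generation over $R$, your Cayley--Hamilton step has no footing. (If one \emph{did} know $X/I_H X$ is finite, pseudo-nullity would follow---but via two applications of Perrin-Riou's descent lemma, not via Cayley--Hamilton.) Moreover, you yourself flag that the finiteness of $X/I_H X$ is unproved; the genus/control sketch along the full $\Zp^{2}$-direction $\wt K/K_\p$ is not an argument, and it is not clear the claimed finiteness even holds.

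The paper avoids both difficulties by inserting the intermediate $\Zp^{2}$-extension $K^{(2)}:=K\wt F$ and applying Perrin-Riou's lemma one variable at a time. For $K^{(1)}\subset K^{(2)}$ it shows, by a finite-level argument, that the maximal abelian-over-$K^{(1)}$ subextension $L_2\subset L^{(2)}$ is already abelian over some finite layer $K_n\subset K^{(1)}$, and then computes $\rankZp\Gal(L_2/K_n)=2$ using the rank-one bound for the maximal abelian pro-$p$ extension of $K_n$ unramified outside the primes over~$\bp$; hence the $\Gal(K^{(2)}/K^{(1)})$-coinvariants of $X^{(2)}$ are \emph{finite} and $X^{(2)}$ is pseudo-null over $\L^{(2)}$. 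For $K^{(2)}\subset K^{(3)}=\wt K$ the paper does \emph{not} aim for finiteness: it proves only that the $\Gal(K^{(3)}/K^{(2)})$-coinvariants of $X^{(3)}$ are \emph{pseudo-null} over $\L^{(2)}$, by writing them as an extension of $X^{(2)}$ by a finite sum of inertia groups $I_\QQ\hookrightarrow\Zp$ (finitely many since every prime of $K$ above $p$ is finitely decomposed in $K^{(2)}$) and exhibiting, for each prime $\q$ of $K$ above $p$, two coprime annihilators $\nu_1(\q)-1,\ \nu_2(\q)-1\in\L^{(2)}$ coming from the rank-$2$ decomposition group $D_\q\subset\Gal(K^{(2)}/K)$. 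A second application of Perrin-Riou then yields pseudo-nullity of $X^{(3)}$ over $\L^{(3)}$. This two-stage descent, with the explicit rank computation at a finite layer and the decomposition-group annihilator trick, is precisely the structural input your sketch lacks.
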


\begin{rem}
GGC is known to be true for some number fields and some prime numbers. We briefly recall previous results. Let $K$ be a number field, $h_{K}$ the class number of $K$, and $K^{+}$ the maximal totally real subfield of $K$. We denote by $K_{\infty}^{+}$ the cyclotomic $\Zp$-extension of $K^{+}$.

\begin{enumerate}
\item (Minardi {\cite[Proposition 3.A]{Minardi:1986}}, see also {\cite[Theorem A, Theorem B]{Ozaki:1998}}) GGC holds when $K$ is an imaginary quadratic field and $p$ does not divide $h_{K}$.
\item (Bandini {\cite[Corollary 3.4, Theorem 3.6]{Bandini:2003}}) GGC holds when the following conditions are satisfied.
\begin{enumerate}
\item $p$ is an odd prime number.
\item $K$ is an imaginary biquadratic field containing two imaginary quadratic fields $E$ and $F$.
\item $p$ does not split in $E$ and does not divide the class number $h_{E}$ of $E$.
\item $p$ does not split in $K^{+}$ and GGC holds for $K^{+}$ and $p$.
\item GGC holds for $F$ and $p$.
\end{enumerate}
\item (Itoh {\cite[Theorem 1.1]{Itoh:2011}}, Fujii \cite[Theorem 2]{Fujii:2017}) GGC holds when the following conditions are satisfied.
\begin{enumerate}
\item $p$ is an odd prime number.
\item $K$ is a CM field such that $p$ splits completely in $K/\Q$.
\item $p$ does not divide the class number $h_{K}$ of $K$.
\item The Leopoldt conjecture holds for $K^{+}$ and $p$.
\item The Iwasawa invariants  $\lambda$, $\mu$ and $\nu$ of $K^{+}_{\infty}/K^{+}$ are zero.
\end{enumerate}
\item (Sharifi \cite[Theorem 1.3]{Sharifi:2008}) GGC holds when $K = \Q(\mu_{p})$ for $p < 1000$.
\item Kataoka proved GGC for certain complex cubic fields; see \cite{Kataoka:2017} for details.
\end{enumerate}
\end{rem}

\begin{rem}
Let $(F, K, p)$ be as in Theorem \ref{main-theorem}. Moreover, assume that $K$ is an imaginary biquadratic field and $p$ is an odd prime number. In this case, Theorem \ref{main-theorem} follows from Bandini's results in \cite{Bandini:2003}. In fact, the decomposition field of $p$ in $K/\Q$ is the imaginary quadratic field $F$. Let $E$ be the imaginary quadratic field different from $F$ contained in $K$. Then $p$ does not split in $E$ and $K^{+}$. Since $[K:E] = [K:F] = [K:K^{+}] = 2$ and $p$ is odd, $p$ does not divide the class numbers $h_{E}$, $h_{F}$, and $h_{K^{+}}$. Since $p$ does not split in $K^{+}/\Q$, the Iwasawa invariants  $\lambda$, $\mu$ and $\nu$ of $K^{+}_{\infty}/K^{+}$ are zero (see Lemma \ref{kK}). Hence GGC holds for $K^{+}$ and $p$. Finally, since $p$ does not divide $h_{F}$, GGC holds for $F$ and $p$ by Minardi's results {\cite[Proposition 3.A]{Minardi:1986}}.
\end{rem}

The outline of this paper is as follows. In Section 2, we give some preliminary results on units and Iwasawa modules. In Section 3, we shall explain the outline of the proof of Theorem \ref{main-theorem}. The proof is divided into three steps. In Section 4, 5, and 6, we give the proof of each step.

\section{Preliminaries}
Let $p$ be a prime number and $K$ an algebraic number field. We denote by $\O_{K}$ the ring of integers of $K$, by $E_{K} := \O_{K}^{\times}$ the group of units of $K$, and by $A_{K}$ the Sylow $p$-subgroup of the ideal class group of $K$. For a prime $\p$ of $K$, let $K_{\p}$ be the completion of $K$ at $\p$, and $\O_{K_{\p}}$ the ring of integers of $K_{\p}$. Let $U_{\p,1} := 1 +\p\O_{K_{\p}}$ be the group of principal units in $K_{\p}$. 

Let $S$ be a non-empty set of primes of $K$ lying above $p$. We put
$$E_{K}(S) := \{a \in E_{K} \mid a \equiv 1\  (\mod \p) \ \text{for all} \ \p \in S\}.$$
We define the diagonal map
$$\vphi_{S} \colon E_{K}(S) \to \prod_{\p \in S}U_{\p,1}, \ \ a \mapsto (a,\dots,a).$$
We denote by $\ol{\vphi_{S}(E_{K}(S))}$ the closure of the image $\vphi_{S}(E_{K}(S))$ in $\prod_{\p \in S}U_{\p,1}$. 
\par Let $M_{K}(S)$ be the maximal abelian pro-$p$ extension of $K$ unramified outside the primes in $S$. It is known that the Galois group $\Gal(M_{K}(S)/K)$ is a finitely generated $\Zp$-module. (See \cite[Chapter X]{Neukirch:2008}.)

\vspace{\baselineskip}
\begin{lem}\label{CFT}
There is an exact sequence
$$0 \to \frac{\prod_{\p \in S}U_{\p,1}}{\ \ol{\vphi_{S}(E_{K}(S))}\ } \to \Gal(M_{K}(S)/K) \to A_{K} \to 0$$
of finitely generated $\Zp$-modules.
\qed
\end{lem}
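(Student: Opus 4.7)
The plan is to derive the exact sequence from id\`ele-theoretic global class field theory. Let $J_{K}$ denote the id\`ele group of $K$, set $U^{S} := \prod_{v \notin S,\ v\ \text{finite}} \O_{K_{v}}^{\times}$, and $K_{\infty}^{\times} := \prod_{v \mid \infty} K_{v}^{\times}$. Since $M_{K}(S)/K$ is the maximal abelian pro-$p$ extension unramified at each finite prime outside $S$, global reciprocity identifies $\Gal(M_{K}(S)/K)$ with the maximal pro-$p$ quotient of $J_{K}/K^{\times}\overline{U^{S} K_{\infty}^{\times}}$.

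First I would produce the surjection on the right. The Hilbert $p$-class field $H_{K}$ is unramified everywhere, hence contained in $M_{K}(S)$, so restriction yields $\Gal(M_{K}(S)/K) \twoheadrightarrow \Gal(H_{K}/K) \cong A_{K}$ with kernel $\Gal(M_{K}(S)/H_{K})$. Because the maximal subextension of $M_{K}(S)/K$ unramified at every place is $H_{K}$ itself, this kernel is topologically generated by the inertia subgroups at the primes in $S$. By local class field theory the inertia at $\p \in S$ is the image of $\O_{K_{\p}}^{\times}$ under local reciprocity, and since $\p$ lies above $p$ the residue field of $\p$ has order prime to $p$, so the $p$-part of $\O_{K_{\p}}^{\times} \cong k_{\p}^{\times} \times U_{\p,1}$ is exactly $U_{\p,1}$. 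Hence $\Gal(M_{K}(S)/H_{K})$ coincides with the image of the map $\iota \colon \prod_{\p \in S} U_{\p,1} \to \Gal(M_{K}(S)/K)$ induced by local reciprocity.

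Next I would compute $\ker \iota$ by unwinding the id\`elic description: $(u_{\p}) \in \ker \iota$ if and only if the id\`ele with entries $u_{\p}$ at $\p \in S$ and $1$ elsewhere lies in $K^{\times}\overline{U^{S} K_{\infty}^{\times}}$. Comparing components forces the existence of $\beta \in K^{\times}$ with $\beta = u_{\p}$ in $K_{\p}^{\times}$ for each $\p \in S$ and $\beta \in \O_{K_{v}}^{\times}$ at every other finite $v$; such a $\beta$ is a global unit congruent to $1$ modulo every $\p \in S$, i.e.\ $\beta \in E_{K}(S)$ with $\vphi_{S}(\beta) = (u_{\p})$. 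Passing to closures to match the topology of the id\`elic quotient gives $\ker \iota = \overline{\vphi_{S}(E_{K}(S))}$, and combining this with the previous step yields the claimed exact sequence.

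The main technical point I expect is keeping the topology straight: the kernel must be the closure $\overline{\vphi_{S}(E_{K}(S))}$ rather than $\vphi_{S}(E_{K}(S))$ itself because the denominator in the id\`elic quotient is already closed, and I would need to verify that this enlargement interacts cleanly with both the passage to the maximal pro-$p$ quotient and the inertia identification above; the remainder is routine bookkeeping within the standard class-field-theoretic machinery.
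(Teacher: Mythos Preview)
Your proposal is correct and follows exactly the approach the paper has in mind: the paper's own proof is a one-line appeal to global class field theory together with a reference to \cite[Lemma 1]{Fujii:2017}, and what you have written is precisely the standard id\`elic derivation underlying that citation. Your outline---identifying $\Gal(M_{K}(S)/K)$ id\`elically, realising $A_{K}$ as the quotient via the Hilbert $p$-class field, reading off the kernel as the image of $\prod_{\p\in S}U_{\p,1}$ through local inertia, and then computing its kernel as $\overline{\vphi_{S}(E_{K}(S))}$---is the expected unpacking, and your closing remark about handling closures and the pro-$p$ quotient correctly flags the only genuine technical care point.
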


\begin{proof}
This follows from global class field theory. (For example, see \cite[Lemma 1]{Fujii:2017}.)
\end{proof}

For a finitely generated $\Zp$-module $M$, the \textit{$\Zp$-rank} of $M$ is defined to be the dimension of the vector space $M\otimes_{\Zp}\Q_{p}$ over $\Q_{p}$.
\begin{lem}\label{rank1}
Let $F$ be an imaginary quadratic field and $p$ a prime number which splits in $F/\Q$. Let $\p$ be a prime of $F$ lying above $p$. Let $K/F$ be a finite abelian extension. Let $S$ be the set of all the primes of $K$ lying above $\p$. Then the $\Zp$-rank of $\Gal(M_{K}(S)/K)$ is 1.
\end{lem}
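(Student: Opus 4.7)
The plan is to apply Lemma~\ref{CFT}, which gives an exact sequence
\[
0 \to \prod_{\P \in S} U_{\P,1} / \ol{\vphi_S(E_K(S))} \to \Gal(M_K(S)/K) \to A_K \to 0
\]
of finitely generated $\Zp$-modules. Since $A_K$ is finite, $\rankZp A_K = 0$, so the $\Zp$-rank of $\Gal(M_K(S)/K)$ equals $\rankZp \prod_{\P \in S} U_{\P,1} - \rankZp \ol{\vphi_S(E_K(S))}$.

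For the first term: since $p$ splits in $F/\Q$, we have $F_\p = \Q_p$, so $[K_\P : \Q_p] = [K_\P : F_\p]$ for each $\P \in S$. Summing over $\P \mid \p$ yields $\sum_{\P \mid \p}[K_\P : F_\p] = [K:F]$, hence $\rankZp \prod_{\P \in S} U_{\P,1} = [K:F]$.

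For the second term, I claim $\rankZp \ol{\vphi_S(E_K(S))} = [K:F] - 1$. Since $F$ is imaginary quadratic and $F \subseteq K$, the field $K$ is totally imaginary with $r_2(K) = [K:F]$, and Dirichlet's unit theorem gives that the $\Z$-rank of $E_K$ is $[K:F] - 1$; the finite-index subgroup $E_K(S) \subseteq E_K$ has the same rank. The upper bound $\rankZp \ol{\vphi_S(E_K(S))} \leq [K:F] - 1$ is immediate, and the matching lower bound is equivalent to the induced map $\vphi_S \otimes \Zp \colon E_K(S) \otimes \Zp \to \prod_{\P \in S} U_{\P,1}$ being injective modulo torsion. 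My plan here is to invoke Brumer's theorem: since $K/F$ is abelian with $F$ imaginary quadratic, Leopoldt's conjecture holds for $K$, giving injectivity of the full logarithm map $\log_p \colon E_K \otimes \Q_p \hookrightarrow \prod_{v \mid p} K_v$. To refine this to injectivity into only the $S$-components, I will decompose $E_K \otimes \Q_p$ and $\prod_{\P \in S} K_\P$ into $\Gal(K/F)$-isotypic components and verify non-vanishing character by character, exploiting that $\Gal(K/F)$ acts freely and transitively on the $[K:F]$ complex archimedean places of $K$ (so Dirichlet identifies $E_K \otimes \Q$ with $\Q[\Gal(K/F)]/\Q$ as $\Gal(K/F)$-modules).

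The main obstacle is this refinement from ``all primes above $p$'' Leopoldt to ``primes above $\p$ only'' Leopoldt, a partial Leopoldt assertion specific to the imaginary-quadratic split setting. Combining the above yields $\rankZp \Gal(M_K(S)/K) = [K:F] - ([K:F] - 1) = 1$, as desired.
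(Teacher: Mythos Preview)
Your proposal is correct and follows the same route as the paper, which simply cites the proofs of NSW (10.3.6) and (10.3.16) for the key rank computation $\rankZp\bigl(\prod_{\P\in S} U_{\P,1}\big/\ol{\vphi_S(E_K(S))}\bigr)=1$ and then invokes Lemma~\ref{CFT}; your outline (local rank $[K:F]$, then the Brumer--Baker character-by-character argument using $E_K\otimes\Q\cong\Q[\Gal(K/F)]/\Q$ to obtain $\rankZp\ol{\vphi_S(E_K(S))}=[K:F]-1$) is precisely an unpacking of those cited arguments. The ``partial Leopoldt'' refinement you flag as the main obstacle is exactly what the NSW proof yields when specialized to a single prime $\p$ of $F$.
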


\begin{proof}
By the same argument as in the proof of \cite[(10.3.6) Theorem]{Neukirch:2008} and \cite[(10.3.16) Theorem]{Neukirch:2008}, the $\Zp$-rank of 
$$\frac{\prod_{\P \in S}U_{\P,1}}{\ \ol{\varphi_{S}(E_{K}(S))}\ }$$
 is 1. Hence the assertion follows by Lemma \ref{CFT}.
\end{proof}

\begin{lem}[Iwasawa \cite{Iwasawa:1956}]\label{kK}
Let $K$ be an algebraic number field and $p$ a prime number. Let $L/K$ be a cyclic extension of $p$-power degree. Assume that there is a prime $\p$ of $K$ lying above $p$ such that $L/K$ is unramified outside $\p$. If the class number $h_{L}$ of $L$ is divisible by $p$, then the class number $h_{K}$ of $K$ is also divisible by $p$.
\end{lem}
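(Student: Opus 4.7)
The plan is to prove the contrapositive: assuming $p \nmid h_{K}$, conclude $p \nmid h_{L}$. I would induct on $n \geq 1$, where $[L:K] = p^{n}$. The main inputs are Chevalley's ambiguous class number formula for cyclic extensions of prime degree, together with the standard fact that a $p$-group acting on a nontrivial finite $p$-group has a nontrivial fixed subgroup.

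For the base case $n = 1$, set $G = \Gal(L/K)$. Chevalley's formula reads
$$|Cl_{L}^{G}| \;=\; \frac{h_{K} \cdot \prod_{v} e_{v}}{[L:K] \cdot [E_{K} : E_{K} \cap N_{L/K}(L^{\times})]},$$
where $v$ ranges over all places of $K$ and $e_{v}$ is the ramification index. Since $L/K$ is unramified outside $\p$, we have $\prod_{v} e_{v} = e_{\p}$; moreover $e_{\p} = p$, because otherwise $L/K$ would be a nontrivial unramified abelian $p$-extension of $K$, contradicting $p \nmid h_{K}$ via class field theory. The formula then shows $|Cl_{L}^{G}|$ divides $h_{K}$, so $p \nmid |Cl_{L}^{G}|$ and $A_{L}^{G} = 0$. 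The fixed-point theorem for the $p$-group $G$ acting on the finite $p$-group $A_{L}$ forces $A_{L} = 0$, i.e., $p \nmid h_{L}$.

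For the induction step $(n \geq 2)$, let $L_{1}$ be the unique intermediate field with $[L_{1}:K] = p$. By the same reasoning as in the base case, $L_{1}/K$ must ramify at $\p$; since $[L_{1}:K] = p$, this ramification is total, and the relation $e_{\p} f_{\p} g_{\p} = p$ with $e_{\p} = p$ forces $g_{\p} = 1$. Hence there is a unique prime $\p_{1}$ of $L_{1}$ above $\p$, and $L/L_{1}$ is cyclic of degree $p^{n-1}$ and unramified outside the single prime $\p_{1}$ of $L_{1}$ lying over $p$. Applying the base case to $L_{1}/K$ gives $p \nmid h_{L_{1}}$, and the induction hypothesis applied to $L/L_{1}$ then gives $p \nmid h_{L}$, completing the induction. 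The main subtlety is ensuring that the hypothesis ``unramified outside a single prime above $p$'' descends from $L/K$ to $L/L_{1}$; this is precisely what the total ramification of $\p$ in $L_{1}/K$ guarantees.
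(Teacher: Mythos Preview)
Your argument is correct. The induction on $n = \log_p [L:K]$ together with Chevalley's ambiguous class number formula is a standard and clean way to prove this; the reduction step works because total ramification of $\p$ in $L_1/K$ guarantees a unique prime $\p_1$ above $\p$, so the single-prime hypothesis persists for $L/L_1$.

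Your route differs from the paper's. The paper does not give a self-contained proof: it disposes of the case where $\p$ is \emph{not} totally ramified in $L/K$ by observing that the inertia fixed field $L^{I_\p}$ is then a nontrivial everywhere-unramified abelian $p$-extension of $K$, so $p \mid h_K$ by class field theory; for the remaining (main) case where $\p$ is totally ramified it simply cites Iwasawa's original 1956 note. In effect the paper isolates the easy case and defers the substantive one, whereas you supply the substantive argument in full. What you gain is a complete proof that does not rely on the reader chasing the citation; what the paper's split buys is brevity, and its first case is slightly slicker than yours since it proves the direct implication in one line rather than via the contrapositive. Your approach is essentially the classical genus-theory argument that underlies Iwasawa's own proof.
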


\begin{proof}
If $\p$ is not totally ramified in $L/K$, let $I_{\p} \subset \Gal(L/K)$ be the inertia group at $\p$. Then $L^{I_{\p}}/K$ is unramified everywhere. By global class field theory, the class number $h_{K}$ is divisible by $p$. If $\p$ is totally ramified in $L/K$, see \cite[II]{Iwasawa:1956}.
\end{proof}

\begin{lem}[Perrin-Riou \cite{Perrin-Riou:1984}]\label{G/H->G}
Let $M$ be a finitely generated torsion module over $\Zp\jump{T_{1}, \dots, T_{d}}$ for some $d \geq 2$. If $M/T_{d}M$ is a pseudo-null module over $\Zp\jump{T_{1}, \dots, T_{d-1}}$, then $M$ is a pseudo-null module over $\Zp\jump{T_{1}, \dots, T_{d}}$. 
\end{lem}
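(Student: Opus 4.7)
The plan is to lift a pair of coprime annihilators from $\Zp\jump{T_{1},\dots,T_{d-1}}$ to $\Zp\jump{T_{1},\dots,T_{d}}$ by a Cayley--Hamilton / adjugate trick; once this is accomplished, the lemma follows immediately from the definition of pseudo-null recalled in the introduction.

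First I would unpack the hypothesis: since $M/T_{d}M$ is pseudo-null over $\Zp\jump{T_{1},\dots,T_{d-1}}$, there exist nonzero coprime elements $a,b \in \Zp\jump{T_{1},\dots,T_{d-1}}$ that annihilate $M/T_{d}M$, i.e.\ $aM \subseteq T_{d}M$ and $bM \subseteq T_{d}M$. Fixing a system of generators $x_{1},\dots,x_{m}$ of $M$ over $\Zp\jump{T_{1},\dots,T_{d}}$, the first inclusion allows me to write $ax_{i} = T_{d}\sum_{j}c_{ij}x_{j}$ for some matrix $C = (c_{ij})$ with entries in $\Zp\jump{T_{1},\dots,T_{d}}$, so that $(aI_{m} - T_{d}C)\vec{x} = 0$. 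Multiplying by the adjugate of $aI_{m} - T_{d}C$ then yields $A := \det(aI_{m} - T_{d}C) \in \mathrm{Ann}(M)$, and expansion of the determinant shows $A \equiv a^{m} \pmod{T_{d}}$, so in particular $A \neq 0$. The same construction applied to $b$ produces a second nonzero annihilator $B$ of $M$ with $B \equiv b^{m} \pmod{T_{d}}$.

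The step that I expect to require the most care is the verification that $A$ and $B$ are coprime in $\Zp\jump{T_{1},\dots,T_{d}}$. If $\pi$ is an irreducible common factor, it cannot be associate to $T_{d}$ since $\bar{A} = a^{m} \neq 0$, so the image $\bar{\pi}$ in $\Zp\jump{T_{1},\dots,T_{d-1}}$ is nonzero and divides both $a^{m}$ and $b^{m}$. Because $\Zp\jump{T_{1},\dots,T_{d-1}}$ is a UFD and $a,b$ are coprime, every irreducible factor of $\bar{\pi}$ must divide both $a$ and $b$, forcing $\bar{\pi}$ to be a unit. Since $\Zp\jump{T_{1},\dots,T_{d}}$ is a complete local ring with $T_{d}$ in its maximal ideal, any element reducing to a unit modulo $T_{d}$ is itself a unit, which makes $\pi$ a unit and contradicts its irreducibility. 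Thus $A$ and $B$ are nonzero coprime annihilators of $M$, and $M$ is pseudo-null over $\Zp\jump{T_{1},\dots,T_{d}}$ by the definition given in the introduction.
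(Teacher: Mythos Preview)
Your argument is correct. The determinant/adjugate trick cleanly lifts the coprime annihilators, and your coprimality check is sound: in the UFD $\Zp\jump{T_{1},\dots,T_{d-1}}$ one has $\gcd(a^{m},b^{m})=\gcd(a,b)^{m}=1$, so $\bar\pi$ is a unit, and locality of $\Zp\jump{T_{1},\dots,T_{d}}$ with $T_{d}$ in the maximal ideal forces $\pi$ to be a unit. Note, incidentally, that your proof never uses the torsion hypothesis on $M$; it is in fact redundant, since the existence of the nonzero annihilator $A$ already forces $M$ to be torsion.

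As for comparison: the paper does not give its own proof of this lemma but simply refers the reader to Perrin-Riou's original argument and to the expositions in Bandini, Minardi, and Kataoka. Your approach is a self-contained elementary proof tailored to the ``two coprime annihilators'' definition of pseudo-null used in the introduction, avoiding the structure theory of modules over regular local rings that some of the cited references invoke. The trade-off is that the cited approaches (via characteristic ideals or height considerations) generalize more readily, whereas yours is shorter and entirely explicit for the case at hand.
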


\begin{proof}
See \cite[Proposition 3.1]{Bandini:2003} and \cite[Chapter 4.B]{Minardi:1986}. See also \cite[Lemma 2.2]{Kataoka:2017}. (For the original proof, see \cite[Lemme 2]{Perrin-Riou:1984}.)
\end{proof}

\section{Outline of the proof of Theorem \ref{main-theorem}.}
Let $F$ be an imaginary quadratic field. Let $p$ be a prime number which splits in $F/\Q$. Let $(p) = \p_{F}\bp_{F}$ be the decomposition of $p$ in $F$. Assume that $\p_{F}$ does not split in $K/F$. Let $\p_{K}$ be the unique prime of $K$ lying above $\p_{F}$. Note that $\bp_{F}$ may or may not split in $K/F$. 

There is a unique $\Zp$-extension $F^{(1)}$ of $F$ such that $F^{(1)}/F$ is unramified outside $\p_{F}$ by Lemma \ref{rank1}. Similarly, there is a unique $\Zp$-extension $F'^{(1)}$ of $F$ such that $F'^{(1)}/F$ is unramified outside $\bp_{F}$.

We put 
$$F^{(2)}:= F^{(1)}F'^{(1)}, \quad K^{(1)} := KF^{(1)}, \quad K^{(2)} := KF^{(2)}.$$

The following conditions are satisfied.
\begin{enumerate}
\item $K^{(1)}/K$ is a $\Zp$-extension unramified outside $\p_{K}$.
\item $K^{(2)}/K$ is a $\Zp^{2}$-extension unramified outside $\p_{K}$ and the primes of $K$ lying above $\bp_{F}$.
\item $K^{(2)}$ contains $K^{(1)}$, and $K^{(2)}/K^{(1)}$ is a $\Zp$-extension unramified outside the primes of $K^{(1)}$ lying above $\bp_{F}$.
 \end{enumerate}

%Since $F$ is an imaginary quadratic field, there is a unique $\Zp^{2}$-extension $F^{(2)}$ of $F$. We put $K^{(2)} := KF^{(2)}$. Then it follows that $K^{(1)} \subset K^{(2)}$ since $K^{(1)} = KF^{(1)}$. Note that $K^{(2)}/K^{(1)}$ is unramified outside the primes of $K^{(1)}$ lying above $\bp_{K}$ since $F^{(2)}/F^{(1)}$ is unramified outside the primes of $F^{(1)}$ lying above $\bp_{F}$.

Since $K$ is a quadratic extension of $F$, the Leopoldt conjecture holds for $K$ and $p$ ; see \cite[(10.3.16) Theorem]{Neukirch:2008}. Therefore, there is a unique $\Zp^{3}$-extension $K^{(3)}$ of $K$.

 For $i = 1, 2, 3$, let $L^{(i)}$ be the maximal unramified abelian pro-$p$ extension of $K^{(i)}$. We put 
 $$X^{(i)} := \Gal(L^{(i)}/K^{(i)}), \quad \L^{(i)} :=  \Zp\jump{\Gal(K^{(i)}/K)}.$$
 Note that $\L^{(i)}$ is isomorphic to the $i$-variable power series ring $\Zp\jump{T_{1},\dots,T_{i}}$. 

\[
	\xymatrix{
		&L^{(3)} \ar@{-}[dl] \ar@{-}@/^{10pt}/[dl]^--{X^{(3)}}&&\\
		K^{(3)} \ar@{-}[dr]  &&L^{(2)} \ar@{-}[dl] \ar@{-}@/^{10pt}/[dl]^--{X^{(2)}}&\\
		&K^{(2)}\ar@{-}[dl] \ar@{-}[dr] &&L^{(1)} \ar@{-}[dl] \ar@{-}@/^{10pt}/[dl]^--{X^{(1)}}\\
		F^{(2)} \ar@{-}[dr] \ar@{-}@/_{10pt}/[dr]_{\text{unramified outside $\bp_{F}$}}&&K^{(1)} \ar@{-}[dl] \ar@{-}[dr]&\\
		&F^{(1)} \ar@{-}[dr] \ar@{-}@/_{10pt}/[dr]_{\text{unramified outside $\p_{F}$}}&&K \ar@{-}[dl]\\
		&&F \ar@{-}[dr]&\\
		&&&\Q
	}
\]

The outline of the proof of Theorem \ref{main-theorem} is as follows.

\begin{description}
\item[Step 1] First, we shall show that $X^{(1)} = 0$.

\item[Step 2] Then, we shall show that $X^{(2)}$ is a pseudo-null module over $\L^{(2)}$.

\item[Step 3] Finally, we shall show that $X^{(3)}$ is a pseudo-null module over $\L^{(3)}$.
\end{description}

\section{Step 1: The $\Zp$-extension $K^{(1)}/K$}
\begin{prop}\label{X=0}
$X^{(1)} = 0.$
\end{prop}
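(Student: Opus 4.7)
The plan is to reduce the claim $X^{(1)} = 0$ to showing $A_{K_n^{(1)}} = 0$ for each finite layer $K_n^{(1)}$ of $K^{(1)}/K$. Indeed, by class field theory each $A_{K_n^{(1)}}$ is isomorphic to the Galois group of the maximal unramified abelian $p$-extension of $K_n^{(1)}$, and $X^{(1)} = \Gal(L^{(1)}/K^{(1)})$ is the inverse limit of the groups $A_{K_n^{(1)}}$ with respect to the norm maps; so once every $A_{K_n^{(1)}}$ vanishes, $X^{(1)} = 0$ follows immediately.

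To obtain $A_{K_n^{(1)}} = 0$, I would invoke Lemma \ref{kK}. Two hypotheses must be checked. First, $K_n^{(1)}/K$ is cyclic of $p$-power degree, which is automatic since it is the $n$-th layer of a $\Zp$-extension. Second, there must exist a single prime of $K$ above $p$ outside of which $K_n^{(1)}/K$ is unramified. Here the assumptions of the theorem are crucial: $F^{(1)}/F$ is, by construction, unramified outside $\p_F$, and because $\p_F$ does not split in $K/F$, there is a unique prime $\p_K$ of $K$ lying above $\p_F$. Hence $K^{(1)} = KF^{(1)}$, and \emph{a fortiori} every layer $K_n^{(1)}$, is unramified over $K$ outside $\p_K$. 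Since $p \nmid h_K$ by hypothesis, Lemma \ref{kK} yields $p \nmid h_{K_n^{(1)}}$, i.e., $A_{K_n^{(1)}} = 0$.

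The argument is essentially a direct application of Iwasawa's classical lemma, and I do not foresee a serious obstacle. The only subtle point is the verification that exactly one prime of $K$ lies above $\p_F$, which is precisely ensured by the non-split hypothesis on $\p_F$ in $K/F$; note that what happens at $\bp_F$ plays no role here, since $K^{(1)}/K$ is unramified at every prime above $\bp_F$, so those primes never enter into the ramification locus that Lemma \ref{kK} requires to consist of a single prime.
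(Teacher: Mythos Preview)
Your proposal is correct and follows essentially the same approach as the paper: apply Lemma~\ref{kK} (Iwasawa's lemma) to the layers of $K^{(1)}/K$, using that $K^{(1)}/K$ is unramified outside the single prime $\p_K$ and that $p \nmid h_K$. The paper's proof is a one-line invocation of Lemma~\ref{kK}, leaving implicit the passage through finite layers and the inverse limit that you spell out explicitly.
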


\begin{proof}
Since $K^{(1)}/K$ is unramified outside $\p_{K}$ and $p$ does not divide the class number $h_{K}$ of $K$, it follows from Lemma \ref{kK} that $X^{(1)} = 0$.
\end{proof}

\section{Step 2: The $\Zp^{2}$-extension $K^{(2)}/K$}

Let $L_{2}$ be the maximal abelian extension of $K^{(1)}$ contained in $L^{(2)}$. Then $\Gal(L_{2}/K^{(2)})$ is the maximal quotient of $X^{(2)}$ on which $\Gal(K^{(2)}/K^{(1)})$ acts trivially. 

Let $S^{(1)}$ be the set of all the primes of $K^{(1)}$ lying above $\bp_{F}$.

For each $\bP \in S^{(1)}$, let $I_{\bP} \subset \Gal(L_{2}/K^{(1)})$ be the inertia subgroup at $\bP$. Then we have
$$\sum_{\bP \in S^{(1)}}I_{\bP} = \Gal(L_{2}/K^{(1)})$$
since $X^{(1)} = 0$ (see Proposition \ref{X=0}) and $L_{2}/K^{(1)}$ is unramified outside $S^{(1)}$.

\begin{lem}\label{finitely-decomposed}
\begin{enumerate}
\item $\p_{K}$ is totally ramified in $K^{(1)}/K$, and finitely decomposed in $KF'^{(1)}/K$.
\item Every prime of $K$ lying above $\bp_{F}$ is finitely decomposed in $K^{(1)}/K$.
\item Every prime of $K$ lying above $p$ is finitely decomposed in $K^{(2)}/K$.
\end{enumerate}
\end{lem}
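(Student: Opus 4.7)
The plan is to prove the three assertions in order. The first part of (1) is a standard Hilbert-class-field argument using $p \nmid h_K$; the remaining ``finitely decomposed'' claims are reduced by local arguments to analogous statements over $F$, which are then settled by global Artin reciprocity.

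For the first part of (1), if $\p_K$ were not totally ramified in $K^{(1)}/K$, then the fixed field of its inertia subgroup in $\Gal(K^{(1)}/K) \cong \Zp$ would be a nontrivial everywhere-unramified extension of $K$, contradicting $p \nmid h_K$. For the second part, I argue by contradiction: if $\p_K$ is completely split in $KF'^{(1)}/K$, then since $K_{\p_K}/\Q_p$ is of finite degree (equal to $2$, as $\p_F$ does not split in $K/F$), the local equality $(KF'^{(1)})_{\p_K'} = K_{\p_K}$ forces $F'^{(1)}_{\p_F'} \subset K_{\p_K}$; since $F'^{(1)}_{\p_F'}/\Q_p$ is either trivial or of infinite degree, we deduce $F'^{(1)}_{\p_F'} = \Q_p$, i.e., $\p_F$ is completely split in $F'^{(1)}/F$. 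Now let $\alpha \in F^{\times}$ generate the principal ideal $\p_F^{h_F}$, and apply global Artin reciprocity for $F'^{(1)}/F$: since $F'^{(1)}/F$ is unramified outside $\bp_F$, the identity $\sum_v \text{Art}_v(\alpha) = 0$ reduces to $h_F \cdot \text{Frob}_{\p_F} + \text{Art}_{\bp_F}(\alpha) = 0$ in $\Gal(F'^{(1)}/F) \cong \Zp$ (using $v_{\p_F}(\alpha) = h_F$ and $v_v(\alpha) = 0$ for $v \neq \p_F$, and the fact that $\p_F$ is unramified in $F'^{(1)}/F$). The contribution $\text{Art}_{\bp_F}(\alpha)$ is nonzero because $\alpha$ is not a root of unity in $F$, hence not in $F_{\bp_F} = \Q_p$ by injectivity of the completion map, so by local class field theory its image in the (nontrivial) inertia subgroup at $\bp_F$ is nontrivial. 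Thus $h_F \cdot \text{Frob}_{\p_F} \neq 0$ in the torsion-free group $\Zp$, forcing $\text{Frob}_{\p_F} \neq 0$ and contradicting the split hypothesis.

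Assertion (2) follows by the identical argument with the roles of $\p_F$ and $\bp_F$ exchanged. For (3), since $\Gal(K^{(2)}/K) \cong \Zp^2$ decomposes along the $K^{(1)}$- and $KF'^{(1)}$-directions, it suffices to check that the decomposition subgroup of every prime of $K$ above $p$ has $\Zp$-rank $2$: for $\p_K$, combine the rank-$1$ inertia from the totally ramified $K^{(1)}$-direction (by (1)) with the rank-$1$ image in $\Gal(KF'^{(1)}/K)$ also provided by (1); for a prime above $\bp_F$, combine the rank-$1$ inertia from the $KF'^{(1)}$-direction (as $\bp_F$ is the unique candidate for ramification in the nontrivial $\Zp$-extension $F'^{(1)}/F$ and hence must be ramified) with the rank-$1$ decomposition image in $\Gal(K^{(1)}/K)$ from (2). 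The hardest step will be the verification of the nonvanishing of $\text{Art}_{\bp_F}(\alpha)$ in the argument for (1) second; once this is in place, the decomposition/inertia bookkeeping for (3) is routine.
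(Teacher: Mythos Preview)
Your argument is correct and follows the same skeleton as the paper: the first part of (1) via the $p \nmid h_K$ Hilbert-class-field argument, the finitely-decomposed claims by reducing to the corresponding statements for $\p_F$ and $\bp_F$ over $F$, and (3) by combining rank-$1$ inertia in one $\Zp$-direction with rank-$1$ decomposition in the other. The only difference is that the paper simply cites \cite[Lemma 3]{Fujii:2017} for the fact that $\p_F$ (resp.\ $\bp_F$) is finitely decomposed in $F'^{(1)}/F$ (resp.\ $F^{(1)}/F$), whereas you supply a self-contained proof of that fact via the Artin product formula applied to a generator of $\p_F^{h_F}$---in effect reproving the cited lemma.
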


\begin{proof}
First, we shall show (1). Since $K^{(1)}/K$ is unramified outside $\p_{K}$ and $p$ does not divide the class number $h_{K}$ of $K$, the prime $\p_{K}$ is totally ramified in $K^{(1)}/K$.  By \cite[Lemma 3]{Fujii:2017}, $\p_{F}$ is finitely decomposed in $F'^{(1)}/F$. Hence $\p_{K}$ is finitely decomposed in $KF'^{(1)}/K$. 

Next, we shall show (2). By \cite[Lemma 3]{Fujii:2017}, $\bp_{F}$ is finitely decomposed in $F^{(1)}/F$. Hence every prime of $K$ lying above $\bp_{F}$ is finitely decomposed in $K^{(1)}/K$. 

Finally, the assertion (3) follows from (1) and (2) since the ramification index of $\bp_{F}$ in $F'^{(1)}/F$ is infinite.
\end{proof}

\begin{lem}
There is an intermediate field $K_{n}$ of $K^{(1)}/K$ such that for every $\bP \in S^{(1)}$, the inertia group $I_{\bP}$ is a $\Zp\jump{\Gal(K^{(1)}/K_{n})}$-submodule of $\Gal(L_{2}/K^{(1)})$.\end{lem}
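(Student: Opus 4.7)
The plan is to exploit the finiteness of $S^{(1)}$, together with the fact that conjugation in $\Gal(L_{2}/K)$ permutes the inertia groups $I_{\bP}$ according to the action of $\Gal(K^{(1)}/K)$ on primes lying above $\bp_{F}$. First I would observe that $S^{(1)}$ is a finite set: there are at most two primes of $K$ above $\bp_{F}$, and by Lemma \ref{finitely-decomposed}(2) each of them is finitely decomposed in $K^{(1)}/K$.

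Next, $L^{(2)}$ is Galois over $K$ (being the maximal unramified abelian pro-$p$ extension of the Galois extension $K^{(2)}/K$), and the commutator subgroup $[\Gal(L^{(2)}/K^{(1)}), \Gal(L^{(2)}/K^{(1)})]$ is characteristic, so $L_{2}/K$ is Galois as well. Consequently $\Gal(K^{(1)}/K)$ acts on $\Gal(L_{2}/K^{(1)})$ by conjugation through lifts in $\Gal(L_{2}/K)$, and this is exactly the $\L^{(1)}$-module structure. Since $\Gal(K^{(1)}/K) \cong \Zp$ and $S^{(1)}$ is finite, the induced action on $S^{(1)}$ factors through a finite quotient; choosing $n$ large enough, $\Gal(K^{(1)}/K_{n})$ lies in the kernel, where $K_{n}$ denotes the $n$-th layer of $K^{(1)}/K$. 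Hence every $\bP \in S^{(1)}$ is fixed by $\Gal(K^{(1)}/K_{n})$.

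It then suffices to invoke the standard transformation rule for inertia groups under Galois conjugation: for any $\sigma \in \Gal(L_{2}/K)$ with image $\bar{\sigma} \in \Gal(K^{(1)}/K)$ lying in $\Gal(K^{(1)}/K_{n})$, one has $\sigma I_{\bP} \sigma^{-1} = I_{\bar{\sigma}(\bP)} = I_{\bP}$. This is precisely the statement that $I_{\bP}$ is stable under the $\Gal(K^{(1)}/K_{n})$-action, hence a $\Zp\jump{\Gal(K^{(1)}/K_{n})}$-submodule of $\Gal(L_{2}/K^{(1)})$. The only non-trivial input here is the finite decomposition statement of Lemma \ref{finitely-decomposed}(2); the remaining steps are formal, so I do not expect a genuine obstacle.
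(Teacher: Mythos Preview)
Your argument is correct and follows essentially the same approach as the paper: both proofs use Lemma~\ref{finitely-decomposed}(2) to conclude that $S^{(1)}$ is finite, then choose $n$ large enough that $\Gal(K^{(1)}/K_{n})$ fixes every $\bP \in S^{(1)}$ (the paper phrases this via decomposition groups of primes $\q$ of $K$ above $\bp_{F}$), and finally invoke the standard conjugation rule $\sigma I_{\bP}\sigma^{-1}=I_{\bar\sigma(\bP)}$. Your explicit justification that $L_{2}/K$ is Galois is a detail the paper leaves implicit.
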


\begin{proof}
For a prime $\q$ of $K$ lying above $\bp_{F}$, let $D_{\q} \subset \Gal(K^{(1)}/K)$ be the decomposition group at $\q$. By Lemma \ref{finitely-decomposed} (2), the index of $D_{\q}$ in $\Gal(K^{(1)}/K)$ is finite. We take a sufficiently large $n$ such that
$$p^{n} \geq [\Gal(K^{(1)}/K) : D_{\q}]$$
for every prime $\q$ of $K$ lying above $\bp_{F}$. Let $K_{n}$ be the intermediate field of $K^{(1)}/K$ such that $[K_{n}:K] = p^{n}$. Since $K^{(1)}/K$ is unramified over every prime of $K$ lying above $\bp_{F}$, every prime of $K_{n}$ lying above $\bp_{F}$ remains prime in $K^{(1)}$. This shows that $I_{\bP}$ is a $\Zp\jump{\Gal(K^{(1)}/K_{n})}$-submodule of $\Gal(L_{2}/K^{(1)})$ for every prime $\bP \in S^{(1)}$. 
\end{proof}

\begin{lem}
$L_{2}$ is an abelian extension of $K_{n}$.
\end{lem}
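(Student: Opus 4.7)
The plan is to examine the short exact sequence
$$1 \to \Gal(L_{2}/K^{(1)}) \to \Gal(L_{2}/K_{n}) \to \Gal(K^{(1)}/K_{n}) \to 1,$$
in which the kernel is abelian by the construction of $L_{2}$ and the cokernel $\cong \Zp$ is abelian as well. Thus $\Gal(L_{2}/K_{n})$ is abelian if and only if $\Gal(K^{(1)}/K_{n})$ acts trivially by conjugation on $\Gal(L_{2}/K^{(1)})$, and that triviality is the only point to verify.

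Since $X^{(1)} = 0$ by Proposition \ref{X=0}, and $L_{2}/K^{(1)}$ is unramified outside $S^{(1)}$ (because $L^{(2)}/K^{(2)}$ is unramified and $K^{(2)}/K^{(1)}$ is unramified outside $S^{(1)}$), the inertia subgroups $I_{\bP}$ with $\bP \in S^{(1)}$ topologically generate $\Gal(L_{2}/K^{(1)})$, as observed just before Lemma \ref{finitely-decomposed}. The preceding lemma tells us that each $I_{\bP}$ is $\Gal(K^{(1)}/K_{n})$-stable, so the task reduces to showing that the induced action on each $I_{\bP}$ is trivial.

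For this I plan to detour through $\Gal(K^{(2)}/K^{(1)})$. The natural restriction map $\Gal(L_{2}/K^{(1)}) \twoheadrightarrow \Gal(K^{(2)}/K^{(1)})$ has kernel $\Gal(L_{2}/K^{(2)})$, which is unramified everywhere (since $L_{2} \subset L^{(2)}$ and $L^{(2)}/K^{(2)}$ is unramified). Hence $I_{\bP} \cap \Gal(L_{2}/K^{(2)}) = 1$, so the restriction embeds $I_{\bP}$, $\Gal(K^{(1)}/K_{n})$-equivariantly, into $\Gal(K^{(2)}/K^{(1)})$. The problem therefore reduces to showing that $\Gal(K^{(2)}/K_{n})$ is abelian, and this is essentially automatic: $K^{(2)} = KF^{(2)}$ is the compositum over $F$ of the abelian extensions $K/F$ (quadratic) and $F^{(2)}/F$ (a $\Zp^{2}$-extension), so $K^{(2)}/F$, and a fortiori the sub-extension $K^{(2)}/K_{n}$, is abelian.

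The one genuinely substantive step is spotting this detour through $\Gal(K^{(2)}/K^{(1)})$: once the unramifiedness of $L^{(2)}/K^{(2)}$ is used to replace inertia in $L_{2}/K^{(1)}$ by a piece of $\Gal(K^{(2)}/K^{(1)})$, the potentially delicate Galois action on inertia is converted into an action on a visibly abelian extension of $F$, and no further obstacle should arise.
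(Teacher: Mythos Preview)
Your proof is correct and follows essentially the same approach as the paper: both arguments reduce to showing that $\Gal(K^{(1)}/K_{n})$ acts trivially on each inertia group $I_{\bP}$, and both accomplish this by observing that the restriction $I_{\bP} \to \Gal(K^{(2)}/K^{(1)})$ is injective (because $L_{2}/K^{(2)}$ is unramified) and that the action on $\Gal(K^{(2)}/K^{(1)})$ is trivial since $K^{(2)}/K_{n}$ is abelian. Your write-up is slightly more explicit in framing the reduction via the group extension and in justifying why $K^{(2)}/K_{n}$ is abelian (through $K^{(2)}/F$), but the substance is identical.
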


\begin{proof}
For each $\bP \in S^{(1)}$, the restriction map 
$$I_{\bP} \to \Gal(K^{(2)}/K^{(1)})$$
 is injective since $L_{2}/K^{(2)}$ is an unramified extension.  The action of $\Gal(K^{(1)}/K_{n})$ on $\Gal(K^{(2)}/K^{(1)})$ is trivial since the extension $K^{(2)}/K_{n}$ is abelian. Therefore, we see that $\Gal(K^{(1)}/K_{n})$ acts trivially on $I_{\bP}$ for each prime $\bP \in S^{(1)}$. Hence $\Gal(K^{(1)}/K_{n})$ acts trivially on 
 $$\sum_{\bP \in S^{(1)}}I_{\bP} = \Gal(L_{2}/K^{(1)}). $$
 Consequently, $L_{2}/K_{n}$ is an abelian extension. 
\end{proof}

Note that $\Gal(L_{2}/K_{n})$ is a finitely generated $\Zp$-module since $K_{n}$ is a finite extension of $\Q$. (See \cite[Chapter X]{Neukirch:2008}.)

\[
	\xymatrix{
		&&L^{(2)} \ar@{-}[dl]\\
		&L_{2} \ar@{-}[dl] &\\
		K^{(2)} \ar@{-}[dr] \ar@{-}@/_{15pt}/[ddrr]_{\Zp^{2}}&&\\
		&K^{(1)} \ar@{-}[dr]&\\
		&&K_{n} \ar@{-}[dr] \ar@{-}@/^{10pt}/[dr]^{\Z/p^{n}\Z}&\\
		&&&K
	}
\]

\vspace{\baselineskip}
\begin{lem}\label{rank2}
The $\Zp$-rank of $\Gal(L_{2}/K_{n})$ is 2.
\end{lem}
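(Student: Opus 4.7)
The plan is to sandwich $\rankZp\Gal(L_{2}/K_{n})$ between $2$ and $2$. The lower bound is immediate: since $L_{2}\supseteq K^{(2)}$ and $\Gal(K^{(2)}/K_{n})\cong\Zp^{2}$, the natural surjection $\Gal(L_{2}/K_{n})\twoheadrightarrow \Gal(K^{(2)}/K_{n})$ forces $\rankZp\Gal(L_{2}/K_{n})\geq 2$.

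For the upper bound, I would introduce the inertia subgroup $I\subseteq \Gal(L_{2}/K_{n})$ at the unique prime $\P_{n}$ of $K_{n}$ above $\p_{K}$ and establish a direct-sum decomposition $\Gal(L_{2}/K_{n})=I\oplus\Gal(L_{2}/K^{(1)})$. Two ramification facts feed into this. First, $L_{2}/K^{(1)}$ is unramified at every prime above $\p_{K}$: indeed, $L^{(2)}/K^{(2)}$ is unramified everywhere, and since $K^{(2)}=K^{(1)}F'^{(1)}$ with $F'^{(1)}/F$ unramified outside $\bp_{F}$, the extension $K^{(2)}/K^{(1)}$ is also unramified at primes above $\p_{K}$. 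Hence $I\cap \Gal(L_{2}/K^{(1)})=0$. Second, by Lemma \ref{finitely-decomposed}(1), $\p_{K}$ is totally ramified in $K^{(1)}/K$ and therefore in $K^{(1)}/K_{n}$, so the projection $I\to\Gal(K^{(1)}/K_{n})\cong\Zp$ is surjective, and by the first fact injective; hence an isomorphism. Since $\Gal(L_{2}/K_{n})$ is abelian, the direct-sum decomposition follows.

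Now let $L_{2}':=L_{2}^{I}$ denote the fixed field. Then $L_{2}'/K_{n}$ is abelian pro-$p$, unramified at $\P_{n}$ by construction, and unramified outside $T_{n}\cup\{\P_{n}\}$ (inherited from $L_{2}/K_{n}$), where $T_{n}$ denotes the set of primes of $K_{n}$ above $\bp_{F}$. Therefore $L_{2}'\subseteq M_{K_{n}}(T_{n})$, and Lemma \ref{rank1} yields $\rankZp\Gal(L_{2}'/K_{n})\leq 1$. The splitting identifies $\Gal(L_{2}'/K_{n})$ with $\Gal(L_{2}/K^{(1)})$ via Galois theory, so
$$\rankZp\Gal(L_{2}/K_{n}) = \rankZp I + \rankZp\Gal(L_{2}'/K_{n}) \leq 1 + 1 = 2,$$
which closes the sandwich.

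The main technical step will be the careful verification that $I\cap\Gal(L_{2}/K^{(1)})=0$, which amounts to tracking the ramification through the tower $L^{(2)}\supseteq K^{(2)}\supseteq K^{(1)}$ and using that $F'^{(1)}/F$ is unramified outside $\bp_{F}$. Once that is secured, the direct-sum decomposition and the rank-one result of Lemma \ref{rank1} combine cleanly to yield the desired upper bound.
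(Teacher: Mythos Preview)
Your argument is correct and follows essentially the same route as the paper: bound the inertia at $\p_{K}$ by mapping it into $\Gal(K^{(1)}/K_{n})\cong\Zp$, and bound the quotient $L_{2}^{I}/K_{n}$ using Lemma~\ref{rank1}. The only difference is cosmetic: the paper uses just the short exact sequence $0\to I_{n}\to\Gal(L_{2}/K_{n})\to\Gal(M_{2}/K_{n})\to 0$ and the injectivity of $I_{n}\hookrightarrow\Gal(K^{(1)}/K_{n})$ to get $\rankZp I_{n}\le 1$, so your direct-sum decomposition (and the surjectivity check) is a little more than is strictly needed, but perfectly fine.
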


\begin{proof}
Since $K_{n} \subset K^{(2)} \subset L_{2}$ and $K^{(2)}/K_{n}$ is a $\Zp^{2}$-extension, it is clear that 
$$\rankZp \Gal(L_{2}/K_{n}) \geq 2.$$

It is enough to show the opposite inequality. By Lemma \ref{finitely-decomposed} (1), $\p_{K}$ is totally ramified in $K^{(1)}/K$. Let $I_{n} \subset \Gal(L_{2}/K_{n})$ be the inertia group of $L_{2}/K_{n}$ for the unique prime of $K_{n}$ lying above $\p_{K}$. Since $L_{2}/K^{(1)}$ is unramified at the unique prime of $K^{(1)}$ lying above $\p_{K}$, the restriction map
$$I_{n} \to \Gal(K^{(1)}/K_{n}) \cong \Zp$$
is injective. Hence the $\Zp$-rank of $I_{n}$ is 1. Let $M_{2} := (L_{2})^{I_{n}}$ be the fixed field of $L_{2}$ by $I_{n}$. Then we have the following exact sequence:
$$0 \to I_{n} \to \Gal(L_{2}/K_{n}) \to \Gal(M_{2}/K_{n}) \to 0.$$
Here $M_{2}$ is an abelian pro-$p$ extension of $K_{n}$ which is unramified outside the primes lying above $\bp_{F}$. Since $K_{n}$ is a finite abelian extension of the imaginary quadratic field $F$, the $\Zp$-rank of $\Gal(M_{2}/K_{n})$ is at most 1 by Lemma \ref{rank1}. Therefore, we have
\begin{align*}
\rankZp \Gal(L_{2}/K_{n}) &= \rankZp I_{n} + \rankZp \Gal(M_{2}/K_{n})\\
&\leq 1 + 1\\
&= 2.
\end{align*}
\end{proof}

\begin{prop}\label{step2}
$X^{(2)}$ is a pseudo-null module over $\L^{(2)}$.
\end{prop}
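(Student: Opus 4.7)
The plan is to apply Perrin--Riou's lemma (Lemma \ref{G/H->G}) with $d = 2$. Let $\gamma$ be a topological generator of $\Gal(K^{(2)}/K^{(1)})$ and set $T_{2} := \gamma - 1 \in \L^{(2)}$, so that $\L^{(2)}/T_{2}\L^{(2)} \cong \L^{(1)} = \Zp\jump{T_{1}}$. Because $L_{2}$ is by construction the maximal abelian extension of $K^{(1)}$ contained in $L^{(2)}$, a standard commutator computation identifies
$$X^{(2)}/T_{2}X^{(2)} \;\cong\; \Gal(L_{2}/K^{(2)})$$
as $\L^{(1)}$-modules. The Iwasawa module $X^{(2)}$ is finitely generated and torsion over $\L^{(2)}$ by the standard Greenberg-type theorem for the unramified pro-$p$ Iwasawa module of a $\Zp^{2}$-extension, so Lemma \ref{G/H->G} reduces the proposition to showing that $\Gal(L_{2}/K^{(2)})$ is pseudo-null over $\L^{(1)}$.

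Since $\L^{(1)}$ is a regular local ring of Krull dimension $2$, a finitely generated $\L^{(1)}$-module is pseudo-null precisely when it is finite, so it suffices to prove that $\Gal(L_{2}/K^{(2)})$ is finite. This drops out of the exact sequence
$$0 \to \Gal(L_{2}/K^{(2)}) \to \Gal(L_{2}/K_{n}) \to \Gal(K^{(2)}/K_{n}) \to 0$$
combined with Lemma \ref{rank2}: the middle term has $\Zp$-rank $2$, while the right-hand term, being isomorphic to $\Zp^{2}$, also has $\Zp$-rank $2$. Therefore $\Gal(L_{2}/K^{(2)})$ has $\Zp$-rank $0$, and since it is a subgroup of the finitely generated $\Zp$-module $\Gal(L_{2}/K_{n})$, it is finite.

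I expect no serious obstacle beyond what has already been done: the substantive content lies in Lemma \ref{rank2}, whose proof in turn relied on the ramification analysis of Lemma \ref{finitely-decomposed} and on the class-field-theoretic rank bound in Lemma \ref{rank1}. Given those inputs, the identification of $X^{(2)}/T_{2}X^{(2)}$ with $\Gal(L_{2}/K^{(2)})$, the rank count, and the appeal to Lemma \ref{G/H->G} combine in an essentially formal way to yield the proposition.
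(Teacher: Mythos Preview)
Your proof is correct and follows essentially the same approach as the paper: both show $\Gal(L_{2}/K^{(2)})$ is finite by the rank count coming from Lemma \ref{rank2} and the tower $K_{n} \subset K^{(2)} \subset L_{2}$, and then invoke Lemma \ref{G/H->G}. You are a bit more explicit about the identification $X^{(2)}/T_{2}X^{(2)} \cong \Gal(L_{2}/K^{(2)})$ and the torsion hypothesis needed for Lemma \ref{G/H->G}, but the argument is the same.
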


\begin{proof}
We have
$$\rankZp \Gal(L_{2}/K_{n}) = \rankZp \Gal(L_{2}/K^{(2)}) + \rankZp \Gal(K^{(2)}/K_{n}).$$
By Lemma \ref{rank2}, we have 
$$\rankZp \Gal(L_{2}/K_{n}) = 2.$$
Since $K^{(2)}/K_{n}$ is a $\Zp^{2}$-extension, we have 
$$\rankZp \Gal(K^{(2)}/K_{n}) = 2.$$
 It follows that $\Gal(L_{2}/K^{(2)})$ is a finite abelian group. Hence $\Gal(L_{2}/K^{(2)})$ is a pseudo-null module over $\L^{(1)}$. Therefore, $X^{(2)}$ is a pseudo-null module over $\L^{(2)}$ by Lemma \ref{G/H->G}.
\end{proof}

\section{Step 3: Proof of Theorem \ref{main-theorem}}
Let $L_{3}$ be the maximal abelian extension of $K^{(2)}$ contained in $L^{(3)}$. Similarly as in Step 2, $\Gal(L_{3}/K^{(3)})$ is the maximal quotient of $X^{(3)} = \Gal(L^{(3)}/K^{(3)})$ on which $\Gal(K^{(3)}/K^{(2)})$ acts trivially.

\[
	\xymatrix{
		&&L^{(3)} \ar@{-}[dl] \ar@{-}@/_{12pt}/[ddll]_{X^{(3)}}\\
		&L_{3} \ar@{-}[dl] \ar@{-}[dr]&\\
		K^{(3)} \ar@{-}[dr]&&L^{(2)} \ar@{-}[dl] \ar@{-}@/^{12pt}/[dl]^{X^{(2)}}&\\
	 	&K^{(2)}
	}
\]

\begin{lem}\label{pseudo-null/2}
$\Gal(L_{3}/K^{(2)})$ is a pseudo-null module over $\L^{(2)}$.
\end{lem}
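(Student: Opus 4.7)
The plan is to split $\Gal(L_3/K^{(2)})$ via the inclusion $L^{(2)}\subseteq L_3$ and handle the two pieces separately. First I would verify $L^{(2)}\subseteq L^{(3)}$: the compositum $L^{(2)}K^{(3)}/K^{(3)}$ is unramified (because $L^{(2)}/K^{(2)}$ is) and abelian (as the compositum of the abelian extensions $L^{(2)}/K^{(2)}$ and $K^{(3)}/K^{(2)}$), hence lies in $L^{(3)}$; then $L^{(2)}/K^{(2)}$ abelian forces $L^{(2)}\subseteq L_3$. This yields a short exact sequence of $\L^{(2)}$-modules
$$0 \to \Gal(L_3/L^{(2)}) \to \Gal(L_3/K^{(2)}) \to X^{(2)} \to 0.$$
Since $X^{(2)}$ is pseudo-null over $\L^{(2)}$ by Proposition \ref{step2} and pseudo-nullity is closed under extensions, it suffices to prove that $\Gal(L_3/L^{(2)})$ is pseudo-null over $\L^{(2)}$.

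I would then identify $L^{(2)}$ as the maximal subextension of $L_3/K^{(2)}$ that is unramified over $K^{(2)}$ (immediate from the defining property of $L^{(2)}$ together with the above inclusion), which gives
$$\Gal(L_3/L^{(2)}) = \sum_{\P\mid p} I_{\P},$$
where $\P$ ranges over the primes of $K^{(2)}$ above $p$ and $I_{\P}\subseteq\Gal(L_3/K^{(2)})$ denotes the inertia group. Because $L_3/K^{(3)}$ is unramified, each $I_{\P}$ injects into $\Gal(K^{(3)}/K^{(2)})\cong \Zp$, so is $0$ or isomorphic to $\Zp$; by Lemma \ref{finitely-decomposed}(3) only finitely many primes of $K^{(2)}$ lie above $p$, so $\Gal(L_3/L^{(2)})$ is finitely generated as a $\Zp$-module (and in particular closed in $\Gal(L_3/K^{(2)})$).

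To conclude, I would show that any $\L^{(2)}$-module finitely generated over $\Zp$ is automatically pseudo-null over $\L^{(2)}$. Taking topological generators $\gamma_{1},\gamma_{2}$ of $\Gal(K^{(2)}/K)\cong\Zp^{2}$, each $\gamma_{i}$ satisfies a monic polynomial relation on the module (combine a characteristic polynomial on the $\Zp$-free part with a torsion-killing factor), producing a monic annihilator $F_{i}(T_{i})\in\Zp[T_{i}]\subset\L^{(2)}$; since $F_{1}(T_{1})$ and $F_{2}(T_{2})$ are monic polynomials in distinct variables they are coprime in the unique factorization domain $\L^{(2)}$, which establishes pseudo-nullity. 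The step I expect to require the most care is the generation identity $\Gal(L_3/L^{(2)}) = \sum_{\P}I_{\P}$, which depends on recognizing $L^{(2)}$ as the maximal unramified piece of $L_3/K^{(2)}$ and on the unramifiedness of $L_3/K^{(3)}$; the rest of the argument is essentially formal.
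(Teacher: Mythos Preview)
Your proof is correct and follows the paper's argument closely through the exact sequence, the reduction to $\Gal(L_3/L^{(2)})$, the identification $\Gal(L_3/L^{(2)})=\sum_{\P\mid p}I_{\P}$, and the observation that each $I_{\P}$ injects into $\Gal(K^{(3)}/K^{(2)})\cong\Zp$ with only finitely many $\P$ by Lemma~\ref{finitely-decomposed}(3).

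The one genuine divergence is in how you conclude pseudo-nullity of $\Gal(L_3/L^{(2)})$. You invoke the general principle that any $\L^{(2)}$-module finitely generated over $\Zp$ is pseudo-null, producing monic annihilators $F_1(T_1),F_2(T_2)$ via Cayley--Hamilton on the free part plus a torsion-killing factor. The paper instead works prime-by-prime: for each prime $\q$ of $K$ above $p$, the decomposition group $D_{\q}\subset\Gal(K^{(2)}/K)$ has $\Zp$-rank $2$, and two independent generators $\nu_1(\q),\nu_2(\q)$ of $D_{\q}$ act trivially on $\sum_{\QQ\mid\q}I_{\QQ}$ (since $I_{\QQ}$ embeds in $\Gal(K^{(3)}/K^{(2)})$ and $K^{(3)}/K$ is abelian), so $\nu_1(\q)-1$ and $\nu_2(\q)-1$ give explicit coprime annihilators. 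Your route is shorter and more conceptual; the paper's route is more concrete, yielding explicit annihilators tied to the arithmetic of the primes above $p$ (and not relying on the auxiliary fact that finitely-$\Zp$-generated implies pseudo-null). Both are valid.
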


\begin{proof}
We follow the same argument as in the proof of \cite[Theorem 3.6]{Bandini:2003}. We have an exact sequence
$$0 \to \Gal(L_{3}/L^{(2)}) \to \Gal(L_{3}/K^{(2)}) \to X^{(2)} = \Gal(L^{(2)}/K^{(2)}) \to 0.$$
By Proposition \ref{step2}, $X^{(2)}$ is a pseudo-null module over $\L^{(2)}$. Hence it is enough to show that $\Gal(L_{3}/L^{(2)})$ is a pseudo-null module over $\L^{(2)}$. 

For every prime $\QQ$ of $K^{(2)}$ lying above $p$, let $I_{\QQ} \subset \Gal(L_{3}/K^{(2)})$ be the inertia group at $\QQ$. Since $L^{(2)}$ is the maximal unramified extension of $K^{(2)}$ contained in $L_{3}$, we have
$$\Gal(L_{3}/L^{(2)}) = \sum_{\QQ \mid p}I_{\QQ}.$$
Since $L_{3}/K^{(3)}$ is unramified everywhere, the restriction map 
$$I_{\QQ} \to \Gal(K^{(3)}/K^{(2)}) \cong \Zp$$
is injective. Therefore, we see that $I_{\QQ}$ is 0 or isomorphic to $\Zp$.

Recall that every prime of $K$ lying above $p$ is finitely decomposed in $K^{(2)}$; see Lemma \ref{finitely-decomposed} (3). Let $\q$ be a prime of $K$ lying above $p$. The $\Zp$-rank of the decomposition group $D_{\q} \subset \Gal(K^{(2)}/K) \cong\Zp^{2}$ is 2. Hence $D_{\q}$ is isomorphic to $\Zp^{2}$. Let $\n_{1}(\q)$, $\n_{2}(\q)$ be two independent topological generators of $D_{\q}$. Let $\QQ$ be a prime of $K^{(2)}$ lying above $\q$. Since $D_{\q}$ fixes $\QQ$, it acts on $I_{\QQ}$. The action of $D_{\q}$ on $I_{\QQ}$ is trivial because $\Gal(K^{(3)}/K) \cong \Zp^{3}$ is abelian and $I_{\QQ}$ is mapped injectively into $\Gal(K^{(3)}/K^{(2)})$. Therefore $\n_{1}(\q)-1$ and $\n_{2}(\q)-1$ correspond to two relatively prime elements of 
$$\L^{(2)} = \Zp\jump{\Gal(K^{(2)}/K)} \cong \Zp\jump{T_{1},T_{2}}$$
which annihilate $\sum_{\QQ \mid \q}I_{\QQ}$. Hence  $\sum_{\QQ \mid \q}I_{\QQ}$ is a pseudo-null $\L^{(2)}$-module. 

Since
$$\Gal(L_{3}/L^{(2)}) = \sum_{\QQ \mid p}I_{\QQ} = \sum_{\text{$\q$ : prime of $K$ lying above $p$}}\left(\sum_{\QQ \mid \q}I_{\QQ}\right),$$
we conclude that $\Gal(L_{3}/L^{(2)})$ is a pseudo-null module over $\L^{(2)}$.
\end{proof}

\begin{proof}(Proof of Theorem \ref{main-theorem}.)
We see that $X^{(3)}$ is pseudo-null module over $\L^{(3)}$ by Lemma \ref{pseudo-null/2} and Lemma \ref{G/H->G}. The proof of Theorem \ref{main-theorem} is complete.
\end{proof}

\subsection*{Acknowledgements}
The author would like to express my appreciation to Tetsushi Ito for constructive suggestions and encouragement. The author is also particularly grateful to Satoshi Fujii and Tatsuya Ohshita for helpful discussions and comments on an earlier version of this paper.


\begin{thebibliography}{Z}
\bibitem{Bandini:2003} Bandini, A., \textit{Greenberg's conjecture for $\Zp^{d}$-extensions,} Acta Arith. 108 (2003), no. 4, 357-368. 
\bibitem{Fujii:2017}Fujii, S., \textit{On Greenberg's generalized conjecture for CM-fields, }J. Reine Angew. Math. 731 (2017), 259-278.
\bibitem{Greenberg:1973} Greenberg, R., \textit{The Iwasawa invariants of Γ-extensions of a fixed number field,} Amer. J. Math. 95 (1973), 204-214. 
\bibitem{Greenberg:1998}Greenberg, R., \textit{Iwasawa theory-past and present,} Class field theory-its centenary and prospect (Tokyo, 1998), 335-385, Adv. Stud. Pure Math., 30, Math. Soc. Japan, Tokyo, 2001.
\bibitem  {Itoh:2011}Itoh, T., \textit{On multiple $\Zp$-extensions of imaginary abelian quartic fields,} J. Number Theory 131 (2011), no. 1, 59-66. 
\bibitem{Iwasawa:1956}Iwasawa, K., \textit{A note on class numbers of algebraic number fields}, Abh. Math. Sem. Univ. Hamburg 20 (1956), 257-258. 
\bibitem{Kataoka:2017}Kataoka, T., \textit{On Greenberg's generalized conjecture for complex cubic fields,} Int. J. Number Theory 13 (2017), no. 3, 619-631.
\bibitem{Minardi:1986}Minardi, J. V., \textit{Iwasawa modules for $\Zp^{d}$-extensions of algebraic number fields,} Thesis (Ph.D.)-University of Washington. 1986. 77 pp.
\bibitem{Neukirch:2008}Neukirch, J., Schmidt, A., Wingberg, K., \textit{Cohomology of number fields, Second edition.} Grundlehren der Mathematischen Wissenschaften, 323. Springer-Verlag, Berlin, 2008. 
\bibitem{Ozaki:1998}Ozaki, M., \textit{Iwasawa invariants of $\Zp$-extensions over an imaginary quadratic field,} Class field theory-its centenary and prospect (Tokyo, 1998), 387-399, Adv. Stud. Pure Math., 30, Math. Soc. Japan, Tokyo, 2001. 
\bibitem{Perrin-Riou:1984}Perrin-Riou, B., \textit{Arithm\'{e}ctique des courbes elliptiques et th\'{e}orie d'Iwasawa, } M\'{e}m. Soc. Math. France (N.S.) No. 17 (1984), 130 pp.
\bibitem{Sharifi:2008} Sharifi, R. T., \textit{On Galois groups of unramified pro-p extensions,} Math. Ann. 342 (2008), no. 2, 297-308. 
\end{thebibliography}
\end{document}